\newcommand{\R}{\mathbb{R}}
\newcommand{\Rnnpp}{\mathbb{R}^{2n\times 2p}}
\newcommand{\fs}{^2_{\mathrm{F}}}
\newcommand{\half}{\frac{1}{2}}
\newcommand{\inv}{^{-1}} % inverse
\newcommand{\zz}{^{\top}} % matrix transpose
\newcommand{\symplectic}{\mathrm{Sp}(2p,2n)}
\newcommand{\symplecticgroup}{\mathrm{Sp}(2n)}
\newcommand{\skewset}{{\cal S}_{\mathrm{skew}}}
\newcommand{\symset}{{\cal S}_{\mathrm{sym}}}
\newcommand{\TX}{{\mathrm{T}_{X}}\symplectic}
\newcommand{\NX}{\dkh{\TX}^\perp}
\newcommand{\proj}{\mathcal{P}_X}
\newcommand{\projn}{\proj^\perp}
\newcommand{\proje}{\dkh{\proj}_{\mathrm{e}}}
\newcommand{\projen}{\proje^\perp}
\DeclareMathOperator*{\diag}{diag}
\DeclareMathOperator*{\skewsym}{skew}
\DeclareMathOperator*{\spn}{span}
\DeclareMathOperator*{\tr}{tr}
\newcommand{\dkh}[1]{\left(#1\right)}
\newcommand{\fkh}[1]{\left[#1\right]}
\newcommand{\hkh}[1]{\left\{#1\right\}}
\newcommand{\jkh}[1]{\left\langle#1\right\rangle}
\newcommand{\norm}[1]{\left\|#1\right\|}
\newcommand{\rgrade}[1]{\mathrm{grad}_{\mathrm{e}} f(#1)}				
\definecolor{Gray}{rgb}{0.5,0.5,0.5}
\definecolor{myred}{rgb}{0.7,0,0.1}
\definecolor{mygreen}{rgb}{0,0.6,0.2}
\definecolor{myblue}{rgb}{0.5,0,1}
\begin{document}
\title{Geometry of the symplectic Stiefel manifold endowed with the Euclidean metric\thanks{This work was supported by the Fonds de la Recherche Scientifique -- FNRS and the Fonds Wetenschappelijk Onderzoek -- Vlaanderen under EOS Project no. 30468160.}}
%
%\titlerunning{}
% If the paper title is too long for the running head, you can set
% an abbreviated paper title here

\author{Bin Gao\inst{1} \and
	Nguyen Thanh Son\inst{2} \and
	P.-A. Absil\inst{1} \and
	Tatjana Stykel\inst{3}}
\authorrunning{B. Gao, N. T. Son, P.-A. Absil, and T. Stykel}
% First names are abbreviated in the running head.
% If there are more than two authors, 'et al.' is used.
%
\institute{ICTEAM Institute, UCLouvain, 1348 Louvain-la-Neuve, Belgium \email{gaobin@lsec.cc.ac.cn} \and
	Thai Nguyen University of Sciences, Thai Nguyen, Vietnam \and
	Institute of Mathematics, University of Augsburg, Augsburg, Germany}

\maketitle              % typeset the header of the contribution

\begin{abstract}
The symplectic Stiefel manifold, denoted by $\symplectic$, is the set of linear symplectic maps between the standard symplectic spaces $\mathbb{R}^{2p}$ and $\mathbb{R}^{2n}$. When $p=n$, it reduces to the well-known set of $2n\times 2n$ symplectic matrices. We study the Riemannian geometry of this manifold viewed as a Riemannian submanifold of the Euclidean space $\Rnnpp$. The corresponding normal space and projections onto the tangent and normal spaces are investigated. Moreover,  we consider optimization problems on the symplectic Stiefel manifold. We obtain the expression of the Riemannian gradient with respect to the Euclidean metric, which then used in optimization algorithms. Numerical experiments on the nearest symplectic matrix problem and the symplectic eigenvalue problem illustrate  the effectiveness of Euclidean-based algorithms.

\keywords{Symplectic matrix \and symplectic Stiefel manifold \and Euclidean metric \and optimization.}
\end{abstract}

\section{Introduction}\label{sec:intro}
Let $J_{2m}$ denote the nonsingular and skew-symmetric matrix $\fkh{\begin{smallmatrix}
		0&I_m\\ -I_m & 0
		\end{smallmatrix}}$, where $I_m$ is the $m\times m$ identity matrix and $m$ is any positive integer.
The \emph{symplectic Stiefel manifold}, denoted by
$$\symplectic:=\hkh{X\in\Rnnpp: X\zz J_{2n} X = J_{2p}},$$
is a smooth embedded submanifold of the Euclidean space $\Rnnpp$ ($p\le n$)~~\cite[Proposition~3.1]{gao2020riemannian}. We remove the subscript of $J_{2m}$ and $I_m$ for simplicity if there is no confusion. This manifold was studied in~\cite{gao2020riemannian}: it is closed and unbounded; it has dimension $4np-p(2p-1)$; when $p=n$, it reduces to the \emph{symplectic group}, denoted by $\symplecticgroup$. When $X\in\symplectic$, it is termed as a \emph{symplectic} matrix.

Symplectic matrices are employed in many %different
fields. They are indispensable for finding eigenvalues of (skew-)Hamiltonian matrices~\cite{BennF97,BennF98,BennKM05} and for model order reduction of Hamiltonian systems~\cite{PengM16,BuchBH19}. They appear in Williamson's theorem and the formulation of symplectic eigenvalues of symmetric and positive-definite matrices~\cite{williamson1936algebraic,bhatia2015symplectic,jain_mishra_2020,son2021symplectic}. Moreover, symplectic matrices can be found in the study of optical systems~\cite{fiori2016riemannian} and the optimal control of quantum symplectic gates~\cite{wu2008optimal}. Specifically, some applications can be reformulated as optimization problems on the set of symplectic matrices~\cite{PengM16,son2021symplectic}.

%The symplectic matrices appear in many fields. One application can be found in symplectic-based decompositions. Williamson's theorem~\cite{williamson1936algebraic}, SVD-like decomposition~\cite{xu2005numerical}, symplectic Gram--Schmidt-like algorithm~\cite{salam2005theoretical}, and symplectic Householder transformation~\cite{salam2008optimal} were investigated for producing symplectic matrices. Meanwhile, symplectic matrices play an important role in symplectic eigenvalue problems~\cite{bhatia2015symplectic,son2021symplectic}. 
%%hiroshima2006
%Another class of applications is optimization problem over the set of symplectic matrices, i.e., the symplectic Stiefel manifold. Specifically, it can be found in the study of optical systems~\cite{fiori2016riemannian}, 
%%harris2004averageeye
%optimal control of quantum symplectic gates~\cite{wu2008optimal} and symplectic model order reduction problem~\cite{PengM16,BuchBH19}.
%%AfkhH17

In recent decades, most of studies on the symplectic topic focused on the symplectic group ($p=n$) including geodesics of the symplectic group~\cite{fiori2011solving}, optimality conditions for optimization problems on the symplectic group~\cite{machado2002optimization,wu2010critical,birtea2018optimization}, and optimization algorithms on the symplectic group~\cite{fiori2016riemannian,wang2018riemannian}. However, there was less attention to the geometry of the symplectic Stiefel manifold $\symplectic$. More recently, the Riemannian structure of $\symplectic$ was investigated in~\cite{gao2020riemannian} by endowing it with a new class of metrics called \emph{canonical-like}. 
%In view of its geometry, two structure-preserving updates, quasi-geodesics and symplectic Cayley transform, were proposed. 
This canonical-like metric is different from the standard \emph{Euclidean} metric (the Frobenius inner product in the ambient space $\Rnnpp$)
$$\jkh{X,Y}:=\tr(X\zz Y)\quad \mbox{for}~X,Y\in\Rnnpp,$$
where $\tr(\,\cdot\,)$ is the trace operator. 
A priori reasons to investigate the Euclidean metric on $\symplectic$ are that it is arguably the most natural choice, and that there are specific applications with close links to the Euclidean metric, e.g., the projection onto $\symplectic$ with respect to the Frobenius norm (also known as the nearest symplectic matrix problem)
\begin{equation}\label{eq:nearest}
\min\limits_{X\in\symplectic} \norm{X-A}\fs.
\end{equation} 
Note that this problem does not admit a known closed-form solution for general $A\in\Rnnpp$. 
%Therefore, it is of great interest to explore the Riemannian geometry of $\symplectic$ endowed with the Euclidean metric.

In this paper, we consider the symplectic Stiefel manifold $\symplectic$ as a Riemannian submanifold of the Euclidean space $\Rnnpp$. Specifically, the normal space and projections onto the tangent and normal spaces are derived. As an application, we obtain the Riemannian gradient of any function on $\symplectic$ in the sense of the Euclidean metric. Numerical experiments on  the nearest symplectic matrix problem and the symplectic eigenvalue problem are reported. In addition, numerical comparisons with the canonical-like metric are also presented. We observe that the Euclidean-based optimization methods need fewer iterations than the methods with the canonical-like metric on the nearest symplectic problem, and Cayley-based methods perform best among all the choices.

The rest of paper is organized as follows. In section~\ref{sec:Euclidean}, we study the Riemannian geometry of the symplectic Stiefel manifold endowed with the Euclidean metric. This geometry is further applied to optimization problems on the manifold in section~\ref{sec:optimization}. Numerical results are presented in section~\ref{sec:experiments}. 
%In section~\ref{sec:conclusion}, we draw the conclusion.

\section{Geometry of the Riemannian submanifold $\symplectic$}\label{sec:Euclidean}
In this section, we study the Riemannian geometry of $\symplectic$ equipped with the Euclidean metric.

Given $X\in\symplectic$, let $X_\perp\in\mathbb{R}^{2n\times(2n-2p)}$ be a full-rank matrix such that $\spn(X_{\perp})$ is the orthogonal complement of $\spn(X)$. Then the matrix $\left[XJ\,\,\, JX_{\perp}\right]$ is nonsingular, and every matrix $Y\in\Rnnpp$ can be represented as $Y=XJW+JX_\perp K$,
%\begin{align}\label{eq:tangent-decomp}
%Y=XJW+JX_\perp K,
%\end{align}
where $W\in\R^{2p\times 2p}$ and $K\in\R^{(2n-2p)\times 2p}$; see~\cite[Lemma~3.2]{gao2020riemannian}. The tangent space of $\symplectic$ at $X$, denoted by $\TX$, is given by \cite[Proposition~3.3]{gao2020riemannian}
\begin{subequations}
	\begin{align}
	\TX %&=\{Z\in \Rnnpp: Z\zz  J  X + X\zz J  Z=0\} \label{eq:tangent-1}\\
	&= \{XJW+JX_\perp K: W\in\symset(2p), K\in\R^{(2n-2p)\times 2p}\} \label{eq:tangent-2}\\
	&= \{SJX: S\in\symset(2n)\}, \label{eq:tangent-3}
	\end{align}
\end{subequations}
where $\symset(2p)$ denotes the set of all $2p\times 2p$ real symmetric matrices. These two expressions can be regarded as different parameterizations of the tangent space.

Now we consider the Euclidean metric. Given any tangent vectors $Z_i=XJW_i+JX_\perp K_i$ with $W_i\in\symset(2p)$ and $K_i\in\R^{(2n-2p)\times 2p}$ for $i=1,2$, the standard Euclidean metric is defined as
\begin{align*}
	g_{\mathrm{e}}(Z_1,Z_2) &:= \jkh{Z_1,Z_2} = \tr(Z_1\zz Z_2) \\
	 &~= \tr(W_1\zz J\zz X\zz X J W_2)+\tr(K_1\zz X\zz_\perp X_\perp K_2)  \\
	 &\quad+ \tr(W_1\zz J\zz X\zz JX_\perp K_2) + \tr(K_1\zz X\zz_\perp J\zz X J W_2).
\end{align*}
In contrast with the canonical-like metric proposed in~\cite{gao2020riemannian}
\begin{align*}  \label{eq:rmetric}
g_{\rho,X_\perp}(Z_1,Z_2) & :={\frac{1}{\rho}\, \tr}(W_1\zz W_2)+\tr(K_1\zz K_2) \quad \text{with~} \rho>0,
\end{align*}
$g_{\mathrm{e}}$ has cross terms between $W$ and $K$. Note that $g_{\mathrm{e}}$ is also well-defined when it is extended to $\Rnnpp$. Then the normal space of $\symplectic$ with respect to $g_{\mathrm{e}}$ can be defined as
$$\NX_{\mathrm{e}}:=\hkh{N\in\Rnnpp: g_{\mathrm{e}}(N,Z)=0 \mbox{~for all~} Z\in\TX}.$$
We obtain the following expression of the normal space.

\begin{proposition}\label{proposition:normal-Eucliean}
	Given $X\in\symplectic$, we have
	\begin{equation}\label{eq:normal-Eucliean}
	\NX_{\mathrm{e}} = \hkh{JX\varOmega: \varOmega\in\skewset(2p)},
	\end{equation}
	where $\skewset(2p)$ denotes the set of all $2p\times 2p$ real skew-symmetric matrices.
\end{proposition}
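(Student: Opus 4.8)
The plan is to verify the two inclusions $\hkh{JX\varOmega:\varOmega\in\skewset(2p)}\subseteq\NX_{\mathrm{e}}$ and $\NX_{\mathrm{e}}\subseteq\hkh{JX\varOmega:\varOmega\in\skewset(2p)}$, and then to close the argument with a dimension count so that equality is forced. First I would work with the parameterization \eqref{eq:tangent-3}, $\TX=\hkh{SJX:S\in\symset(2n)}$, since pairing against a candidate normal vector becomes a trace of a product of a symmetric and a skew-symmetric factor. Concretely, for $N=JX\varOmega$ with $\varOmega\in\skewset(2p)$ and an arbitrary tangent vector $Z=SJX$ with $S\in\symset(2n)$, I would compute
\begin{align*}
g_{\mathrm{e}}(N,Z)=\tr\dkh{(JX\varOmega)\zz SJX}=\tr\dkh{\varOmega\zz X\zz J\zz S J X}=\tr\dkh{\varOmega\zz(X\zz J\zz SJX)}.
\end{align*}
The matrix $X\zz J\zz SJX$ is symmetric (because $S$ is symmetric and $J\zz=-J$), while $\varOmega\zz$ is skew-symmetric, so the trace vanishes; hence $JX\varOmega\in\NX_{\mathrm{e}}$. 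This gives the first inclusion with only a one-line computation.

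For the reverse inclusion I would argue by dimensions. The set $\hkh{JX\varOmega:\varOmega\in\skewset(2p)}$ is a linear subspace of $\Rnnpp$, and since $J$ and $X$ have full rank the map $\varOmega\mapsto JX\varOmega$ is injective; therefore this subspace has dimension $\dim\skewset(2p)=p(2p-1)$. On the other hand, $\symplectic$ has dimension $4np-p(2p-1)$ (stated in the introduction), so its normal space in $\Rnnpp$ has dimension $2n\cdot2p-\dkh{4np-p(2p-1)}=p(2p-1)$. Since we have exhibited a $p(2p-1)$-dimensional subspace contained in the $p(2p-1)$-dimensional space $\NX_{\mathrm{e}}$, the two must coincide, which yields \eqref{eq:normal-Eucliean}.

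The only genuine obstacle is making sure the injectivity/dimension claim for $\hkh{JX\varOmega:\varOmega\in\skewset(2p)}$ is airtight: one must check that $JX\varOmega=0$ with $\varOmega$ skew-symmetric forces $\varOmega=0$, which follows because $JX$ has full column rank $2p$ (as $X$ does and $J$ is invertible), and that the assignment $\varOmega\mapsto JX\varOmega$ restricted to $\skewset(2p)$ is linear. Everything else is a routine transpose manipulation. An alternative to the dimension count, if one prefers a self-contained verification, is to take a general $N\in\Rnnpp$, use the decomposition $N=XJW+JX_\perp K$ from \cite[Lemma~3.2]{gao2020riemannian}, impose $g_{\mathrm{e}}(N,Z)=0$ against both families in \eqref{eq:tangent-2}, and solve the resulting conditions on $W$ and $K$; but this is more computational and the dimension argument is cleaner, so I would present the dimension argument as the main proof.
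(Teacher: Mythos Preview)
Your proposal is correct and follows essentially the same strategy as the paper: verify the inclusion $\hkh{JX\varOmega:\varOmega\in\skewset(2p)}\subseteq\NX_{\mathrm{e}}$ by reducing $g_{\mathrm{e}}(N,Z)$ to the trace of a skew-symmetric matrix against a symmetric one, and then conclude by a dimension count. The only cosmetic difference is that you pair $N$ against tangent vectors via the parameterization \eqref{eq:tangent-3} ($Z=SJX$), whereas the paper uses \eqref{eq:tangent-2} ($Z=XJW+JX_\perp K$) together with $X\zz J_{2n}X=J_{2p}$ and $X\zz X_\perp=0$ to obtain $\tr(\varOmega\zz W)=0$ directly; both routes are one-line computations and the remainder of the argument (injectivity of $\varOmega\mapsto JX\varOmega$ and the dimension match $p(2p-1)$) is identical.
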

\begin{proof}
	Given any $N=JX\varOmega$ with $\varOmega\in\skewset(2p)$, and $Z=XJW+JX_\perp K\in\TX$ with $W\in\symset(2p)$, we have
	$g_{\mathrm{e}}(N,Z)=\tr(N\zz Z)=\tr(\varOmega\zz W)=0$, where the last equality follows from $\varOmega\zz =-\varOmega$ and $W\zz = W$. Therefore, it yields $N\in\NX_{\mathrm{e}}$. Counting dimensions of $\TX$ and {the} subspace $\hkh{JX\varOmega: \varOmega\in\skewset(2p)}$, i.e., $4np-p(2p-1)$ and $p(2p-1)$, respectively, the expression~\eqref{eq:normal-Eucliean} holds. 
\end{proof}

Notice that $\NX_{\mathrm{e}}$ is different from the normal space with respect to the canonical-like metric $g_{\rho,X_\perp}$, denoted by $\NX$, which has the expression $\hkh{XJ\varOmega: \varOmega\in\skewset(2p)}$, obtained in~\cite{gao2020riemannian}.

The following proposition provides explicit expressions for the 
orthogonal projection onto the tangent and normal spaces with respect to the metric $g_{\mathrm{e}}$, denoted by $\proje$ and $\projen$, respectively.

\begin{proposition}\label{proposition:projection-Euclidean}
	Given $X\in\symplectic$ and $Y\in\Rnnpp$, we have
	\begin{align}\label{eq:project-tangent-Euclidean}
	\proje(Y) &=  Y-JX{\varOmega_{X,Y}} ,\\
	\projen(Y) &=  JX{\varOmega_{X,Y}},\label{eq:project-normal-Euclidean}
	\end{align}
	where ${\varOmega_{X,Y}}\in\skewset(2p)$ is the unique solution of the {Lyapunov} equation with unknown~$\varOmega$ %Sylvester 
	\begin{equation}\label{eq:lyapunov}
		X\zz X\varOmega+\varOmega X\zz X =2 \skewsym(X\zz J\zz Y)
	\end{equation}
	and $\skewsym(A):=\half (A-A\zz)$ denotes the skew-symmetric part of $A$.
\end{proposition}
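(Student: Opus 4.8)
The plan is to read off the decomposition $Y=\proje(Y)+\projen(Y)$ directly from the description of the normal space in Proposition~\ref{proposition:normal-Eucliean}. Since $\symplectic$ is an embedded submanifold of $\Rnnpp$ and $\NX_{\mathrm{e}}$ is by definition the $g_{\mathrm{e}}$-orthogonal complement of $\TX$, the Frobenius product being a genuine inner product on $\Rnnpp$ gives $\Rnnpp=\TX\oplus\NX_{\mathrm{e}}$ and $\TX=\dkh{\NX_{\mathrm{e}}}^{\perp}$. Hence it is enough to produce $\varOmega\in\skewset(2p)$ such that $Y-JX\varOmega$ is $g_{\mathrm{e}}$-orthogonal to the whole normal space, i.e.\ $g_{\mathrm{e}}\dkh{Y-JX\varOmega,\,JX\Xi}=0$ for every $\Xi\in\skewset(2p)$; then $Y=\dkh{Y-JX\varOmega}+JX\varOmega$ is the (unique) splitting of $Y$ into tangent and normal parts, which yields \eqref{eq:project-tangent-Euclidean}--\eqref{eq:project-normal-Euclidean} with $\varOmega_{X,Y}=\varOmega$.

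First I would unfold the orthogonality condition. Writing $g_{\mathrm{e}}\dkh{Y-JX\varOmega,\,JX\Xi}=\tr\dkh{(Y-JX\varOmega)\zz JX\Xi}$ and using $J\zz J=I$ together with $\varOmega\zz=-\varOmega$ gives $g_{\mathrm{e}}\dkh{Y-JX\varOmega,\,JX\Xi}=\tr\dkh{\dkh{Y\zz JX+\varOmega X\zz X}\Xi}$. Because symmetric and skew-symmetric matrices are orthogonal under the Frobenius inner product, a matrix $M$ satisfies $\tr(M\Xi)=0$ for all $\Xi\in\skewset(2p)$ precisely when $\skewsym(M)=0$; applying this, the tangency condition becomes $\skewsym\dkh{Y\zz JX+\varOmega X\zz X}=0$. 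A short manipulation, using $\skewsym\dkh{Y\zz JX}=-\skewsym\dkh{X\zz J\zz Y}$ and $\skewsym\dkh{\varOmega X\zz X}=\half\dkh{X\zz X\varOmega+\varOmega X\zz X}$ (again from $\varOmega\zz=-\varOmega$ and $X\zz X=\dkh{X\zz X}\zz$), rewrites it as exactly the Lyapunov equation \eqref{eq:lyapunov}.

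It then remains to check that \eqref{eq:lyapunov} is well posed and that its solution is skew-symmetric. Because $X\zz JX=J$ is nonsingular, $X$ has full column rank, so $X\zz X$ is symmetric positive definite, and the Lyapunov operator $\varOmega\mapsto X\zz X\varOmega+\varOmega X\zz X$ is invertible on $\R^{2p\times2p}$ (its eigenvalues are the pairwise sums of the positive eigenvalues of $X\zz X$); this gives a unique solution $\varOmega_{X,Y}$. Transposing \eqref{eq:lyapunov} and using that $X\zz X$ is symmetric while its right-hand side $2\skewsym\dkh{X\zz J\zz Y}$ is skew-symmetric shows that $-\varOmega_{X,Y}\zz$ solves the same equation, so by uniqueness $\varOmega_{X,Y}=-\varOmega_{X,Y}\zz\in\skewset(2p)$. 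Substituting $\varOmega_{X,Y}$ back into the first paragraph yields $Y-JX\varOmega_{X,Y}\in\TX$ and $JX\varOmega_{X,Y}\in\NX_{\mathrm{e}}$, which is \eqref{eq:project-tangent-Euclidean}--\eqref{eq:project-normal-Euclidean}.

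The only step that is not pure bookkeeping is this last one: realizing that the positive definiteness of $X\zz X$ is exactly what makes the Lyapunov operator invertible and forces its solution to be skew-symmetric. Everything else is routine algebra with the relations $J\zz J=I$, $J^2=-I$ and the orthogonality of $\symset(2p)$ and $\skewset(2p)$; as an alternative to the trace computation in the second paragraph, one could first derive the characterization $\TX=\hkh{Z\in\Rnnpp: X\zz JZ\in\symset(2p)}$ from \eqref{eq:tangent-3} (a dimension count shows it is exact) and impose it on $Y-JX\varOmega$, which leads to the same equation \eqref{eq:lyapunov}.
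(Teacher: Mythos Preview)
Your argument is correct and reaches the same Lyapunov equation, but the route differs from the paper's. The paper writes $Y=XJW_Y+JX_\perp K_Y+JX\varOmega$ using the explicit parameterizations \eqref{eq:tangent-2} and \eqref{eq:normal-Eucliean}, then left-multiplies by $X\zz J\zz$ (which annihilates the $JX_\perp K_Y$ term because $X\zz X_\perp=0$) to obtain $X\zz J\zz Y=W_Y+X\zz X\varOmega$, and finally subtracts the transpose to kill the symmetric $W_Y$. You instead bypass the tangent parameterization entirely by characterizing $\TX$ as $\dkh{\NX_{\mathrm{e}}}^\perp$ and imposing orthogonality of $Y-JX\varOmega$ to every $JX\Xi$; this avoids any reference to $X_\perp$ and is a little more self-contained. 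A further point in your favour is that you explicitly verify $\varOmega_{X,Y}\in\skewset(2p)$ from the uniqueness of the Lyapunov solution, whereas the paper leaves this implicit (it is justified there because a skew-symmetric solution is known to exist from the orthogonal decomposition, hence must coincide with the unique $\R^{2p\times 2p}$ solution).
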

\begin{proof}
	For any $Y\in\Rnnpp$, in view of~\eqref{eq:tangent-2} and~\eqref{eq:normal-Eucliean}, it follows that
	$$
	\proje(Y) =  XJW_Y+JX_\perp K_Y, \qquad
	\projen(Y) =  JX\varOmega,
	$$
	with $W_Y\in\symset(2p)$, $K_Y\in\mathbb{R}^{(2n-2p)\times 2p}$ and $\varOmega\in\skewset(2p)$. Further, $Y$ can be represented as
	$$
	Y = \proje(Y)+\projen(Y) = XJW_Y+JX_\perp K_Y +JX\varOmega.
	$$
	Multiplying this equation from the left with $X\zz J\zz$, it follows that
	\begin{align*}
	X\zz J\zz Y = W_Y+X\zz X\varOmega.
	\end{align*}
	Subtracting from this equation its transpose and taking into account that $W\zz =W$ and $\varOmega\zz=-\varOmega$, we get the Lyapunov equation \eqref{eq:lyapunov} with unknown $\varOmega$. Since $X\zz X$ is symmetric positive definite, all its eigenvalues are positive, and, hence, equation 
	\eqref{eq:lyapunov} has a~unique solution $\varOmega_{X,Y}$; see \cite[Lemma 7.1.5]{golub2013matrix}. Therefore, the relation~\eqref{eq:project-normal-Euclidean} holds. Finally, \eqref{eq:project-tangent-Euclidean} follows from $\proje(Y)=Y-\projen(Y)$.
\end{proof}

\begin{figure}[htbp]
%	\vspace{-6mm}
	\small
	\centering
	\subfigure[Canonical-like metric]
	{ 
		\begin{tikzpicture}[scale=.4]
		\filldraw[color=cyan!10] (1.5,6) -- (-1,4) -- (9,3) -- (11,5.3) -- (1.5,6);
		
		\coordinate [label=left:$X$] (X) at (5,4.8);
		\coordinate [label=left:$\mathcal{M}$] (M) at (4,1);
		\coordinate [label=left:${\mathrm{T}_{X}}\mathcal{M}$] (T) at (0,5);
		\coordinate [label=left:$\dkh{{\mathrm{T}_{X}}\mathcal{M}}^\perp$] (N) at ($(X)+(2,3.5)$);
		\coordinate [label=90:$Y$] (Y) at (8,6.5);
		\coordinate [label=-90:{$\proj(Y)$}] (F) at ($(X)+(1.9,-0.1)$);
		\coordinate [label=left:{$\projn(Y)$}] (F1) at ($(X)+(0.52*2,0.52*3.5)$);
		
		\draw (X) -- (N);
		\draw[dashed] (X) -- ($(X)-(2.5*0.4,2.5*0.7)$);
		\draw ($(X)-(2.5*0.4,2.5*0.7)$) -- ($(X)-(4*0.4,4*0.7)$);
		\draw[-{stealth[red]},red,thick] (X) --  (F);
		\draw[-{stealth[blue]},blue,thick] (X) --  (F1);
		\draw[->] (X) --  (Y);
		\draw[dashed] (Y) -- (F);
		\draw[dashed] (Y) -- (F1);
		
		\node [fill=black,inner sep=.8pt,circle] at (X) {};
		
		%		\draw [help lines] (-5,0) grid (15,8);
		\draw (0,2)	.. controls (0.5,5) and (3,5) .. (6,0.5);
		\draw (0,2)	.. controls (1,2.5) and (2,3) .. (3.92,3);
		\draw (6,0.5)	.. controls (7.5,1.5) and (9.5,2) .. (12,2);
		\draw (0.5,3.4)	.. controls (1.7,5.5) and (11,8) .. (12,2);
		
		\draw [densely dotted] (1.5,6) -- (-1,4) -- (9,3) -- (11,5.3) -- (1.5,6);			
		\end{tikzpicture}
	}
	%	\quad%	\qquad\quad
	\subfigure[Euclidean metric]
	{ 
		\begin{tikzpicture}[scale=.4]
		\filldraw[color=cyan!10] (1.5,6) -- (-1,4) -- (9,3) -- (11,5.3) -- (1.5,6);
		
		\coordinate [label=left:$X$] (X) at (5,4.8);
		\coordinate [label=left:$\mathcal{M}$] (M) at (4,1);
		\coordinate [label=left:${\mathrm{T}_{X}}\mathcal{M}$] (T) at (0,5);
		\coordinate [label=left:$\dkh{{\mathrm{T}_{X}}\mathcal{M}}_{\mathrm{e}}^\perp$] (N) at ($(X)+(0,3.5)$);
		\coordinate [label=90:$Y$] (Y) at (8,6.5);
		\coordinate [label=-90:{$\proje(Y)$}] (F) at ($(Y)+(0,-2)$);
		\coordinate [label=left:{$\projen(Y)$}] (F1) at ($(X)+(0,2)$);
		
		\draw (X) -- (N);
		\draw[dashed] (X) -- ($(X)+(0,-2.8)$);
		\draw ($(X)+(0,-2.95)$) -- ($(X)+(0,-3.5)$);
		\draw[-{stealth[red]},red,thick] (X) --  (F);
		\draw[-{stealth[blue]},blue,thick] (X) --  (F1);
		\draw[->] (X) --  (Y);
		\draw[dashed] (Y) -- (F);
		\draw[dashed] (Y) -- (F1);
		
		\node [fill=black,inner sep=.8pt,circle] at (X) {};
		
		%		\draw [help lines] (-5,0) grid (15,8);
		\draw (0,2)	.. controls (0.5,5) and (3,5) .. (6,0.5);
		\draw (0,2)	.. controls (1,2.5) and (2,3) .. (3.92,3);
		\draw (6,0.5)	.. controls (7.5,1.5) and (9.5,2) .. (12,2);
		\draw (0.5,3.4)	.. controls (1.7,5.5) and (11,8) .. (12,2);
		
		\draw [densely dotted] (1.5,6) -- (-1,4) -- (9,3) -- (11,5.3) -- (1.5,6);			
		\end{tikzpicture}
	}
	\caption{Normal spaces and projections associated with different metrics\label{fig:projection} {on $\mathcal{M}=\symplectic$}}
\end{figure}

Figure~\ref{fig:projection} illustrates the difference of the normal spaces and projections for the canonical-like metric $g_{\rho,X_\perp}$ and the Euclidean metric $g_{\mathrm{e}}$. Note that projections with respect to the canonical-like metric only require matrix additions and multiplications (see~\cite[Proposition~4.3]{gao2020riemannian}) while one has to solve the Lyapunov equation~\eqref{eq:lyapunov} in the Euclidean case.

The Lyapunov equation \eqref{eq:lyapunov} can be solved using 
%a dense numerical solver such as 
the Bartels--Stewart method~\cite{bartels1972solution}.
%To overcome this difficulty, we suggest to solve this matrix equation as follows. 
Observe that the coefficient matrix $X\zz X$ is symmetric positive definite, and, hence, it has an eigenvalue decomposition $X\zz X = Q\Lambda Q\zz$, where $Q\in\R^{2p\times 2p}$ is orthogonal and $\Lambda = \diag(\lambda_1,\ldots,\lambda_{2p})$ is diagonal with $\lambda_i >0$ for $i=1,\ldots,2p$. Inserting this decomposition into \eqref{eq:lyapunov} and multiplying it from the left and right with $Q\zz$ and $Q$, respectively, we obtain the equation
\begin{align*}\label{eq:Lyapeq2}
\Lambda U +U\Lambda = R
%\Lambda U^T\tilde{\Sigma}U &+ U^T\tilde{\Sigma}U\Lambda\\ \notag
% &= U^T(M^TJ_{2n}\nabla \bar{g}(M) + \nabla \bar{g}(M)^TJ_{2n}M)U.
\end{align*}
with $R %= [g_{ij}]_{i,j=1}^{2p} 
= 2Q\zz\mathrm{skew}(X\zz JY)Q$ and unknown $U%=[u_{ij}]_{i,j=1}^{2p} 
= Q\zz \varOmega Q$. 
The entries of $U$ can then be computed as
$$
u_{ij} = \dfrac{r_{ij}}{\lambda_i + \lambda_j},\quad i,j = 1,\ldots,2p.
$$
Finally, we find $\Omega = QUQ\zz$. The computational cost for matrix-matrix multiplications involved to generate~\eqref{eq:lyapunov} is $O(np^2)$, and $O(p^3)$ for solving this equation.
%Counting the computational cost $O(np^2)$ for matrix-matrix multiplications involved to generate~\eqref{eq:lyapunov} and $O(p^3)$ for solving this equation,  we conclude that computing the projections \eqref{eq:project-tangent-Euclidean} and \eqref{eq:project-normal-Euclidean} has {the computational complexity} of $O(np^2)$ flops.

\section{Application to Optimization}\label{sec:optimization}
In this section, we consider a continuously differentiable real-valued function $f$ on $\symplectic$ and optimization problems on the manifold.
%\begin{equation*}
%\min\limits_{X\in\symplectic} f(X).
%\end{equation*}

The Riemannian gradient of $f$ at $X\in\symplectic$ with respect to the metric $g_{\mathrm{e}}$, denoted by $\rgrade{X}$, is defined as the unique element of $\TX$ that satisfies the condition $g_{\mathrm{e}}\dkh{\rgrade{X},Z} = \mathrm{D}\bar{f}(X)[Z]$ for all $Z\in \TX$,
%$$
%g_{\mathrm{e}}\dkh{\rgrade{X},Z} = \mathrm{D}\bar{f}(X)[Z]\quad
%\text{ for all } Z\in \TX,
%$$
where $\bar{f}$ is a~smooth extension of $f$ around $X$ in $\Rnnpp$, and $\mathrm{D}\bar{f}(X)$ denotes the Fr\'echet derivative of $\bar{f}$ at $X$.  Since $\symplectic$ is endowed with the Euclidean {metric}, the Riemannian gradient can be readily computed by using~\cite[Section~3.6]{absil2009optimization} as follows.

\begin{proposition}\label{proposition:rgrad-Euclidean}
	The Riemannian gradient of {a} function $f: \symplectic\to\R$ with respect to the Euclidean metric $g_{\mathrm{e}}$ has the following form
	\begin{eqnarray}
	\label{eq:rgrad-e}
	\rgrade{X}  =  \proje(\nabla \bar{f}(X)) = \nabla \bar{f}(X)-JX\varOmega_X,
	\end{eqnarray}
	where $\varOmega_X\in\skewset(2p)$ is the unique solution of the {Lyapunov} equation with unknown~$\varOmega$
	$$X\zz X\varOmega+\varOmega X\zz X =2 \skewsym\dkh{X\zz J\zz \nabla \bar{f}(X)},$$
	and $\nabla \bar{f}(X)$ denotes the (Euclidean, i.e., classical) gradient of $\bar{f}$ at $X$.
\end{proposition}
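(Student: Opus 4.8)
\noindent\emph{Proof idea.} The plan is to derive this as an immediate corollary of two facts already in hand: that $\symplectic$ here carries the \emph{Euclidean} metric and is therefore a Riemannian submanifold of $\Rnnpp$ in the standard sense, and the explicit tangent projection of Proposition~\ref{proposition:projection-Euclidean}. First I would unfold the defining property of $\rgrade{X}$. For every $Z\in\TX$ we have $\mathrm{D}\bar f(X)[Z] = \jkh{\nabla\bar f(X),Z} = g_{\mathrm{e}}\dkh{\nabla\bar f(X),Z}$, since $g_{\mathrm{e}}$ is just the Frobenius inner product and, as noted after Proposition~\ref{proposition:normal-Eucliean}, it extends to all of $\Rnnpp$. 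Hence the characterizing condition becomes $g_{\mathrm{e}}\dkh{\rgrade{X}-\nabla\bar f(X),\,Z}=0$ for all $Z\in\TX$, i.e.\ $\rgrade{X}-\nabla\bar f(X)\in\NX_{\mathrm{e}}$. Together with $\rgrade{X}\in\TX$ and the $g_{\mathrm{e}}$-orthogonal splitting $\Rnnpp=\TX\oplus\NX_{\mathrm{e}}$, this says precisely that $\rgrade{X}=\proje\dkh{\nabla\bar f(X)}$. This is exactly the submanifold-gradient principle of \cite[Section~3.6]{absil2009optimization}; it also shows the construction is independent of the chosen smooth extension $\bar f$, because $\mathrm{D}\bar f(X)[Z]$ for $Z$ tangent depends only on $f|_{\symplectic}$, so writing $\nabla\bar f(X)$ is legitimate.

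It then remains to substitute $Y=\nabla\bar f(X)$ into formula~\eqref{eq:project-tangent-Euclidean} of Proposition~\ref{proposition:projection-Euclidean}. This gives $\rgrade{X}=\nabla\bar f(X)-JX\varOmega_X$ with $\varOmega_X:=\varOmega_{X,\nabla\bar f(X)}\in\skewset(2p)$ the unique solution of the Lyapunov equation $X\zz X\varOmega+\varOmega X\zz X=2\,\skewsym\dkh{X\zz J\zz\nabla\bar f(X)}$, whose unique solvability was already established from the positive definiteness of $X\zz X$ in the proof of Proposition~\ref{proposition:projection-Euclidean}. That is the claimed expression~\eqref{eq:rgrad-e}.

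I do not expect any genuine obstacle here: the statement is essentially a restatement of Proposition~\ref{proposition:projection-Euclidean} composed with the standard fact that, for an embedded Riemannian submanifold, the Riemannian gradient is the orthogonal projection of the ambient gradient. The only points deserving a line of care are (a) checking that the Euclidean metric on $\TX$ is indeed the restriction of the ambient Frobenius product — immediate from the definition of $g_{\mathrm{e}}$ — and (b) the extension-independence just mentioned. Everything else is a direct invocation of earlier results, so the ``hard part'' amounts to little more than bookkeeping.
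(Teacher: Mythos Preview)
Your proposal is correct and matches the paper's own approach: the paper simply invokes \cite[Section~3.6]{absil2009optimization} to say that the Riemannian gradient equals $\proje(\nabla\bar f(X))$, and then reads off the explicit form from Proposition~\ref{proposition:projection-Euclidean}. Your write-up just unfolds these two steps in slightly more detail.
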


In the case of the symplectic group $\symplecticgroup$, the Riemannian gradient~\eqref{eq:rgrad-e} is equivalent to the formulation in \cite{birtea2018optimization}, where the minimization problem was treated as a~constrained optimization problem in the Euclidean space. We notice that $\varOmega_X$ in~\eqref{eq:rgrad-e} is actually the Lagrangian multiplier of the symplectic constraints; see~\cite{birtea2018optimization}. 

Expression~\eqref{eq:rgrad-e} can be rewritten in the parameterization~\eqref{eq:tangent-2}: it follows from \cite[Lemma~3.2]{gao2020riemannian} that
$$\rgrade{X}=XJW_X+JX_\perp K_X$$ 
with $W_X=X\zz J\zz \rgrade{X}$ and $K_X=\dkh{X\zz_\perp J X_\perp}\inv X\zz_\perp \rgrade{X}$. Moreover, for the purpose of using the Cayley retraction~\cite[Definition~5.2]{gao2020riemannian}, it is essential to rewrite~\eqref{eq:rgrad-e} in the parameterization~\eqref{eq:tangent-3} with $S$ in a factorized form as in~\cite[Proposition~5.4]{gao2020riemannian}. To this end, observe that~\eqref{eq:rgrad-e} is its own tangent projection and use the tangent projection formula of~\cite[Proposition~4.3]{gao2020riemannian} to obtain  $$\rgrade{X}=S_X JX$$ 
with $S_{X} = G_X \rgrade{X} (XJ)\zz + XJ (G_X\rgrade{X})\zz$ and $G_X = I-\frac12 XJX\zz J\zz$.

\section{Numerical Experiments}\label{sec:experiments}
In this section, we adopt the Riemannian gradient~\eqref{eq:rgrad-e} and numerically compare the performance of optimization algorithms  with respect to the Euclidean metric. All experiments are performed on a laptop with 2.7 GHz Dual-Core Intel i5 processor and 8GB of RAM running MATLAB R2016b under macOS 10.15.2. The code that produces the result is available from \href{https://github.com/opt-gaobin/spopt}{https://github.com/opt-gaobin/spopt}. 

\begin{figure}[htpb]
	%This test is generated by the matlab script: "test_EucVScanonical.m" 
	\centering
	\subfigure[F-norm of Riemannian gradient (iteration)]
	{\includegraphics[scale=.325]
		{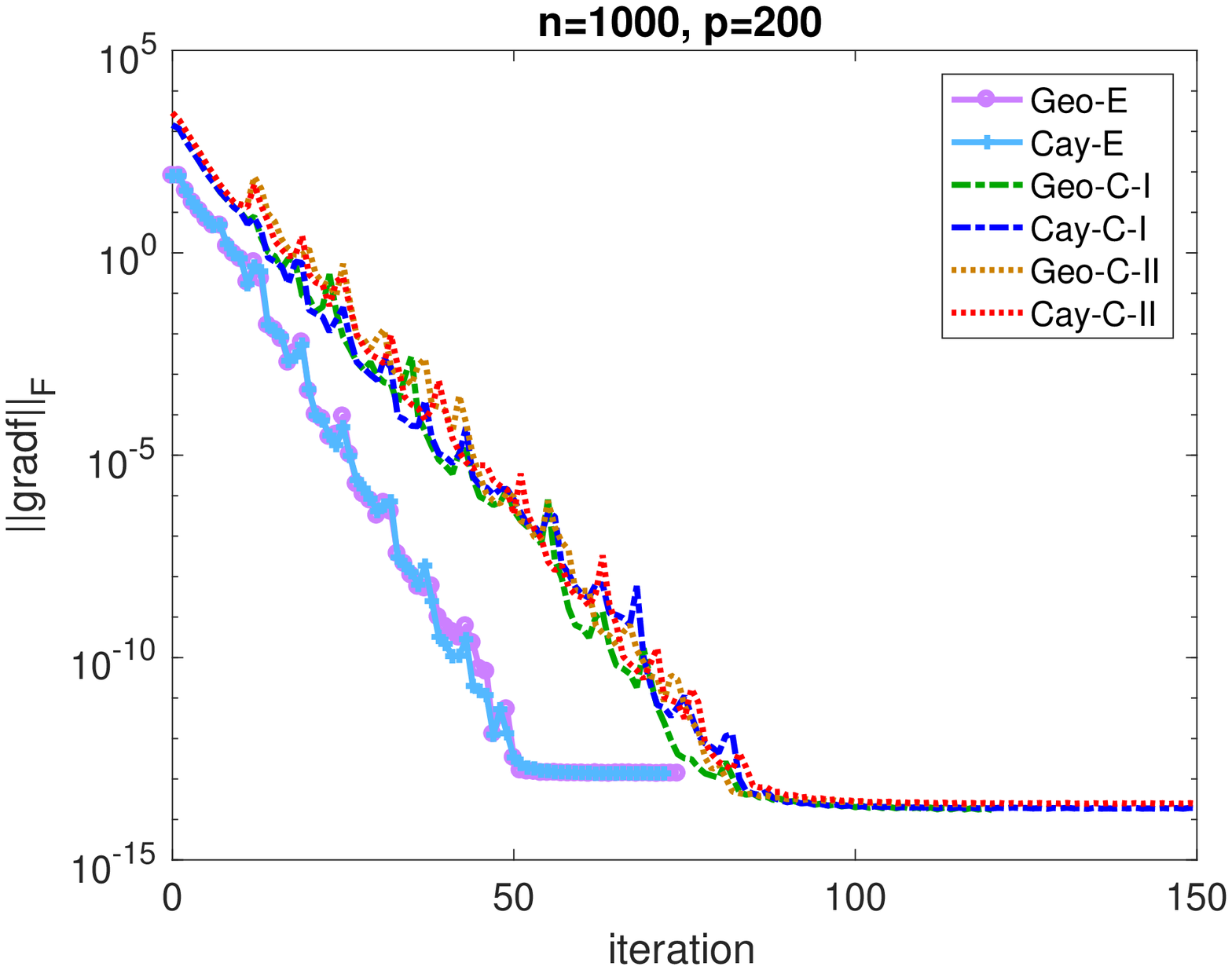}}
	\quad%	\qquad\qquad
	\subfigure[F-norm of Riemannian gradient (time)]
	{\includegraphics[scale=.325]
		{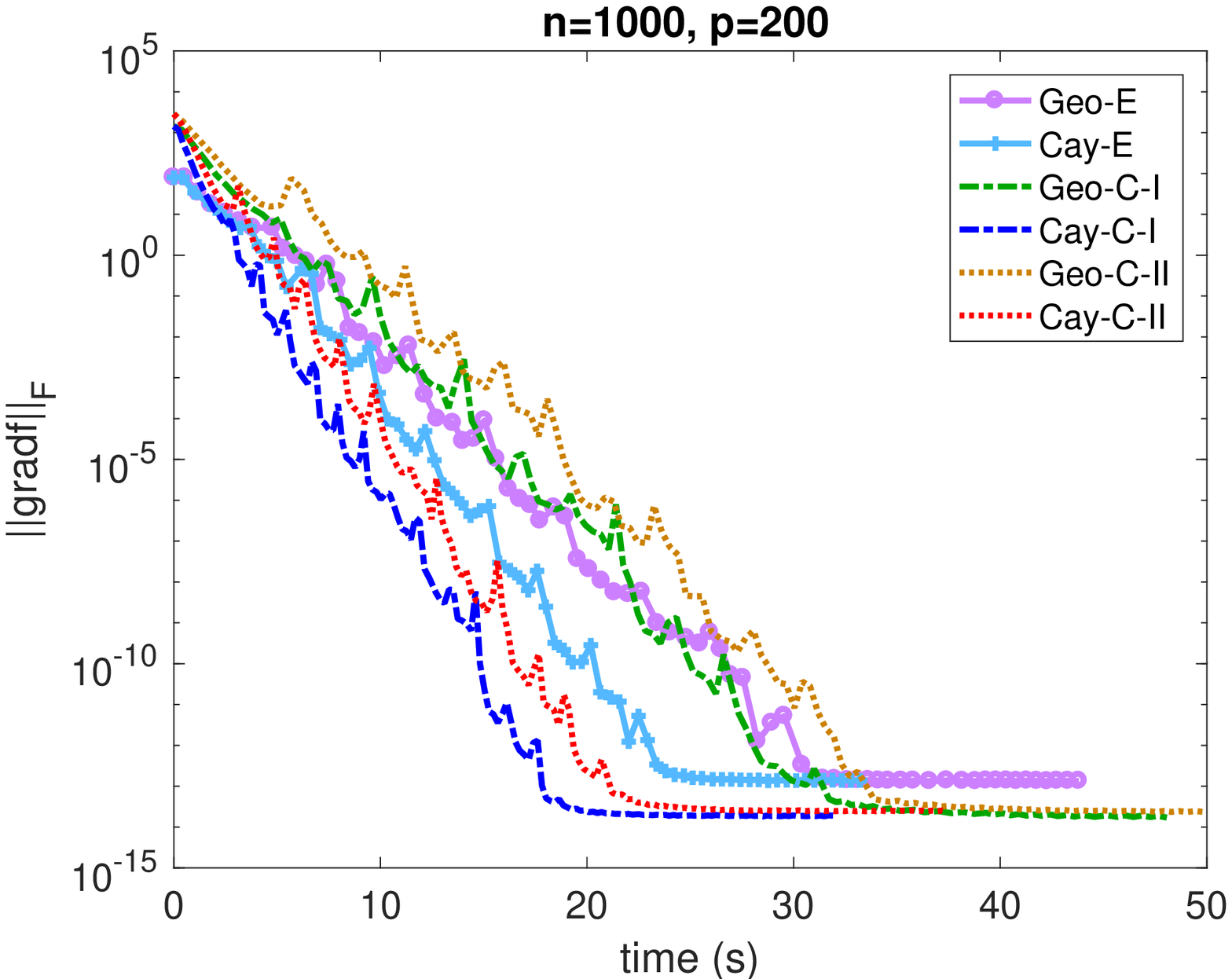}}
	\caption{A comparison of gradient-descent algorithms with different metrics and retractions. Recall that $\mathrm{grad}f$ differs between the ``E'' and ``C'' methods, which explains why they do not have the same initial value.\label{fig:EucVScanonical}}
\end{figure}

First, we consider the optimization problem~\eqref{eq:nearest}. We compare gradient-descent algorithms proposed in~\cite{gao2020riemannian} with different metrics (Euclidean and canonical-like, denoted by ``-E" and ``-C") and retractions (quasi-geodesics and Cayley transform, denoted by ``Geo" and ``Cay"). The canonical-like metric has two formulations, denoted by ``-I" and ``-II", based on different choices of $X_\perp$. Hence, there are six methods involved. The problem generation and parameter settings are in parallel with ones in~\cite{gao2020riemannian}. The numerical results are presented in Figure~\ref{fig:EucVScanonical}. Notice that the algorithms that use the Euclidean metric are considerably superior in the sense of the number of iterations. This can be partly explained by the structure of objective function in~\eqref{eq:nearest}, which is indeed the Euclidean distance. Hence, in this problem the Euclidean metric may be more suitable than other metrics. However, due to their lower computational cost per iteration, algorithms with canonical-like-based Cayley retraction perform best with respect to time among all tested methods, and Cayley-based methods always outperform quasi-geodesics in each setting. 
%In addition, Euclidean-based methods cannot achieve the similar accuracy in contrast with canonical-like settings, while have better performance than type ``-II" setting.

%\begin{figure}[htpb]
%	%This test is generated by the matlab script: "test_speig.m" 
%	\centering
%	\subfigure[errors of symplectic eigenvalues]
%	{\includegraphics[scale=.325]
%		{test_EucVScanonical_eig_vary_n_err}}
%	\quad%	\qquad\qquad
%	\subfigure[normalized residuals]
%	{\includegraphics[scale=.325]
%		{test_EucVScanonical_eig_vary_n_rest}}
%	\subfigure[F-norm of Riemannian gradient (iteration)]
%	{\includegraphics[scale=.325]
%		{test_EucVScanonical_eig_vary_iter}}
%	\quad%	\qquad\qquad
%	\subfigure[F-norm of Riemannian gradient (time)]
%	{\includegraphics[scale=.325]
%		{test_EucVScanonical_eig_vary_timing}}
%	\caption{A comparison of symplectic eigenvalue solvers on varying problem size\label{fig:speig}}
%\end{figure}

The second example is the symplectic eigenvalue problem. We compute the smallest symplectic eigenvalues and eigenvectors of symmetric positive-definite matrices in the sense of Williamson’s theorem; see~\cite{son2021symplectic}. According to the performance in Figure~\ref{fig:EucVScanonical}, we consider ``Cay-E" and ``Cay-C-I" as representative methods. The problem generation and default settings can be found in~\cite{son2021symplectic}. Note that the synthetic data matrix has five smallest symplectic eigenvalues $1,2,3,4,5$. In Table~\ref{tab:speig}, we list the computed symplectic eigenvalues and $1$-norm errors. The results illustrate that our methods are comparable with the structure-preserving eigensolver ``{symplLanczos}" based on a Lanczos procedure~\cite{Amod06}.

\begin{table}[tbhp]
		\caption{Five smallest symplectic eigenvalues of a $1000\times 1000$ matrix computed by different methods}
		\label{tab:speig}
		\begin{center}
			\begin{tabular}{cc c c} \toprule
			 &{symplLanczos} & Cay-E & Cay-C-I\\ \midrule
			&	0.999999999999997 \quad&\quad 1.000000000000000 \quad&\quad 0.999999999999992\\
			&	2.000000000000010  \quad&\quad 2.000000000000010  \quad&\quad 2.000000000000010\\
		     &   3.000000000000014  \quad&\quad 2.999999999999995  \quad&\quad 3.000000000000008\\
			&	4.000000000000004  \quad&\quad 3.999999999999988  \quad&\quad 3.999999999999993\\
			&	5.000000000000016  \quad&\quad 4.999999999999996  \quad&\quad 4.999999999999996 \\ \midrule
			Errors & 4.75e-14 & 3.11e-14 & 3.70e-14 \\ \bottomrule
			\end{tabular}
		\end{center}
\end{table}

%\section{Conclusion}\label{sec:conclusion}
%We have established the geometry of the symplectic Stiefel manifold endowed with the Euclidean metric. Several geometric properties are obtained when the manifold is equipped with the Riemannian structure. In contrast with the canonical-like metric, the Euclidean metric provides a natural way to characterize the geometry, especially in finding the nearest symplectic matrix in the sense of the Frobenius norm distance. These aspects have been verified through numerical examples on different settings. For the future work, the curvature and second-order geometry are of great interest .
%
% ---- Bibliography ----
%
% BibTeX users should specify bibliography style 'splncs04'.
% References will then be sorted and formatted in the correct style.
%
% \bibliographystyle{splncs04}
% \bibliography{bibfile}
%

\end{document}